%
\documentclass{article}
\usepackage{amsmath,amsthm}
\usepackage{amsfonts,amssymb}
\usepackage[all]{xy}


  

\newtheorem{thm}{Theorem}[section]
\newtheorem{cor}[thm]{Corollary}
\newtheorem{lem}[thm]{Lemma}

\newtheorem{defn}[thm]{Definition}

\theoremstyle{remark}


\newcommand{\mR}{\mathbb{R}}
\newcommand{\mC}{\mathbb{C}}
\newcommand{\mN}{\mathbb{N}}
\newcommand{\mE}{\mathbb{E}}

\newcommand{\mP}{\mathbb{P}}


\newcommand{\cM}{\mathcal{M}}

\newcommand{\cP}{\mathcal{P}}
\newcommand{\cC}{\mathcal{C}}

\newcommand{\gog}{\mathfrak{g}}
\newcommand{\goh}{\mathfrak{h}}



\begin{document}
\date{}

\title{The metaplectic Howe duality and polynomial solutions for the symplectic Dirac operator}

\author{Hendrik De Bie, Petr Somberg and Vladimir Sou{\v{c}}ek}

\maketitle

\abstract
We study various aspects of the metaplectic Howe duality realized by the Fischer decomposition 
for the metaplectic representation space of polynomials on $\mR^{2n}$ valued 
in the Segal-Shale-Weil representation. As a consequence, we
determine symplectic monogenics as the space of polynomial solutions 
of the symplectic Dirac operator $D_s$.

{\bf Key words:} Symplectic Dirac operator, Symplectic monogenics, Fischer decomposition, Howe duality.

{\bf MSC classification:}{ 30G35, 37J05, 37J05, 17B10}.
\endabstract

\section{Introduction}

The classical topic of separation of variables, realized by a Howe dual
pair acting on the representation space of interest, is one of the cornerstones in harmonic
analysis. The general abstract classification scheme of the Howe correspondence  
is formulated in \cite{howe}, \cite{tcit}.
One of the geometrically most interesting classical examples leading to the notion of 
spherical harmonic is the Howe dual pair $O(n)\times \mathfrak{sl}(2,\mR)$, 
realized by orthogonal Lie group $O(n)$ acting on the space of polynomials on $\mR^{n}$.
The refinement given by a ``double cover" of this dual pair is associated to the case of spinor-valued polynomials,
realized by 
$Pin(n) \times \mathfrak{osp}(1|2)$ and known in classical Clifford analysis 
as the Fischer decomposition for the space 
of spinor-valued polynomials, see \cite{howeBDES} for more details.
A similar situation appears also in the case of spinor-valued forms, where 
$(Pin(n),\mathfrak{sl}(2,\mR))$ can be regarded as
a ``double cover" of the classical Howe dual pair $(O(n),\mathfrak{o}(3))$ used to 
decompose the space of spinor valued differential forms on a $Spin(n)$-manifold 
into irreducible subbundles,
\cite{slup}.
Recently, a lot of attention (see e.g. \cite{H1, H4, H5, H8}) was devoted to the theory of harmonic and Clifford analysis  
on superspace, leading to dual pairs 
$Sp(2n,\mR) \times \mathfrak{sl}(2,\mR)$ and $Sp(2n,\mR) \times \mathfrak{osp}(1|2)$.

In the present article we start with the polynomial algebra 
on an even dimensional symplectic vector space 
$(\mR^{2n},\omega)$ equipped with canonical symplectic form 
$\omega\in\wedge^2({\mR^{2n}})^\star$. 
The analogue of the spinor representation in this situation
was described many years ago by B. Kostant, who introduced for 
the purposes of representation theory and 
geometric quantization a symplectic analogue of the Dirac operator 
called the symplectic Dirac operator $D_s$, 
see \cite{KOS}. 
The symplectic Dirac operator was studied mainly from the 
geometrical point of view, see \cite{MR2252919} and the references 
therein, and also as an invariant differential operator in \cite{KAD}. 
The spectral properties of the symplectic Dirac 
operator are difficult to obtain and as for its kernel, basically 
nothing is known up to now. 

On the other hand, the general abstract algebraic classification 
scheme of reductive dual pairs \cite{tcit} shows
the existence of the dual pair 
$\mathfrak{sp}(2n,\mR)\times \mathfrak{sl}(2,\mR)\subset \mathfrak{sp}(6n,\mR)$, 
responsible for the multiplicity
free decomposition of the space of polynomial symplectic spinors. However, its natural 
geometrical model comprising the intrinsic action of the metaplectic lift of this dual pair was not constructed.
Let us emphasize that some of the generators of the Howe dual partner (in our case, the symplectic Dirac operator)
are the starting point in the geometric analysis of geometrical structures on manifolds (in our case, the 
space of symplectic spinors on the symplectic space $(\mR^{2n},\omega)$.)

The aim of our paper is to fill this gap by describing the full analogue of the Fischer decomposition
for polynomials on the symplectic vector space with values in the vector space of Kostant's spinors,
naturally including the symplectic Dirac operator and symplectic Clifford algebras. 
  Inspired by the terminology used for the Dirac operator in $Spin$-geometry, we call 
symplectic monogenics the particular irreducible pieces in the solution space of the 
symplectic Dirac operator.
In addition, our geometric realization leads to the dual pair 
 $(Mp(2n,\mR),\mathfrak{sl}(2,\mR))$ as a ``double cover" of the classical 
Howe dual pair $(Sp(2n,\mR), \mathfrak{so}(2,1))$
in the space of endomorphisms of polynomials on $\mR^{2n}$ valued in the 
Segal-Shale-Weil representation of the 
metaplectic Lie group $Mp(2n,\mR)$. Following general principles of Howe dual pairs, 
this is the underlying structure responsible for
the Fischer decomposition mentioned above.

It follows from general abstract principles that the existence of the Howe 
dual pair $G_1\times\gog_2$ allows to separate variables
(in certain specific situations termed Fischer decomposition). This amounts 
to express the elements of the representation 
space as $\sum_{i\in I}R_i\otimes S_i$, where $I$ is an indexing set depending 
on the representation
and $R_i$ resp. $S_i$ is an irreducible representation of $G_1$ resp. $\gog_2$. 
In our specific case $G_1=Mp(2n,\mR)$,
$\gog_2 =\mathfrak{sl}(2,\mR)$ and the representation on polynomials on the 
symplectic vector space valued in 
the Segal-Shale-Weil representation
decomposes into non-isomorphic irreducible infinite dimensional simple modules for 
$\gog_2 =\mathfrak{sl}(2,\mR)$. We note that there are several conventions for 
the Segal-Shale-Weil representation, realized as a direct sum of the pair of 
highest weight or lowest weight modules (or, their completions on the space
of Schwartz functions, for example.)
  
The paper is organized as follows. In Section 2 and Section 3 we review some well-known 
facts on the symplectic Lie algebras and their 
finite dimensional, resp. simple pointed infinite dimensional representations. 
In Section 4 we define the symplectic Dirac operator and 
show how it appears in the realization of the dual partner $\mathfrak{sl}(2,\mR)$. In 
Section 5 we obtain the Fischer decomposition and construct 
explicit projection operators on all summands together with the consequences 
for the kernel of the symplectic Dirac operator 
on $(\mR^{2n},\omega )$ with $\omega$ the canonical symplectic form on $\mR^{2n}$. 
We end with some conclusions and an outlook 
for further research.

Throughout the article $\mN_0$ denotes the set of natural number including zero and
$\mN$ natural numbers without zero.

\section{Symplectic Lie algebra, symplectic Clifford algebra and simple 
weight modules for $\mathfrak{sp}(2n,\mR)$}
\label{sec_2}

In this section we review several basic facts related to the 
structure of the simple Lie algebra $\mathfrak{sp}(2n,\mR)$ and its representation
theory, see e.g., \cite{fh}, and also symplectic Clifford algebras, 
see e.g., \cite{crum}, \cite{MR2252919}, \cite{KAD}.

Let $\epsilon_1,\dots ,\epsilon_n$ be the vectors of the canonical basis of 
$\mC^n$. Identify $\mC^n$ with the dual of the Cartan subalgebra $\goh^\star$ such 
that the root system of $\mathfrak{sp}(2n,\mR)$ is 
$$
\{
\pm(\epsilon_i\pm\epsilon_j): 1\leq i<j\leq n
\}
\cup
\{\pm 2\epsilon_i:1\leq i\leq n
\},
$$
with the set of simple roots  
$$
\triangle =\{\alpha_1=\epsilon_1-\epsilon_2, \alpha_2=\epsilon_2-\epsilon_3, \dots , \alpha_{n-1}=\epsilon_{n-1}-\epsilon_n, \alpha_n=2\epsilon_n\}
$$
and the fundamental weights $\omega_1,\dots ,\omega_n$.

Let us consider the symplectic vector space $(\mR^{2n},\omega )$ and a 
symplectic basis 
$e_1,\dots ,e_n$, $f_1,\dots ,f_n$ 
for the non-degenerate canonical two form $\omega$ on $\mR^{2n}$.
Let $E_{i,j}$ be the $2n\times 2n$ matrix with $1$ on the intersection 
of $i$-th row and $j$-th column, and zero otherwise. 
Then the symplectic Lie algebra $\mathfrak{sp}(2n,\mR)$ is given by linear span of
\begin{eqnarray*}
X_{ij}=E_{i,j}-E_{n+j,n+i}, \; Y_{ij}=E_{i,n+j}+E_{j,n+i}, \; Z_{ij}=E_{n+i,j}+E_{n+j,i}
\end{eqnarray*}
for $i,j\in\{1,\dots ,n\}$.

The metaplectic Lie algebra $\mathfrak{mp}(2n,\mR)$ is a Lie algebra attached to 
the twofold covering 
$\rho : Mp(2n,\mR)\to Sp(2n,\mR)$ of the symplectic Lie group $Sp(2n,\mR)$. It can be realized 
by homogeneity two elements in 
the symplectic Clifford algebra $Cl_s(\mR^{2n},\omega)$, where the homomorphism 
$\rho_\star : \mathfrak{mp}(2n,\mR)\to \mathfrak{sp}(2n,\mR)$ 
is given by
\begin{eqnarray}
& & \rho_\star (e_ie_j)=-Y_{ij},
\nonumber \\
& & \rho_\star (f_{i}f_{j})=Z_{ij},
\nonumber \\
& & \rho_\star (e_if_{j}+f_{j}e_i)=2X_{ij}
\end{eqnarray}
for $i,j\in\{1,\dots ,n\}$.
Recall that $Cl_s(\mR^{2n},\omega)$ is an associative unital algebra,
realized as a quotient of the tensor algebra $T(e_1,\dots ,e_n,f_1,\dots ,f_n)$ by a
two-sided ideal $I\subset T(e_1,\dots ,e_n,f_1,\dots ,f_n)$ generated by
$$
v_i\cdot v_j-v_j\cdot v_i=-2\omega (v_i,v_j)
$$ 
for all $v_i,v_j\in\mR^{2n}$.

There is another useful realization of the symplectic Lie algebra as a subalgebra 
of the Weyl algebra $W_n$ of rank $n$. Let $q_i$, $i\in\{1,\dots ,n\}$, be the generators of 
the algebra of polynomials. The Weyl algebra is an associative algebra generated by 
$\{q_i,\partial_i\}$, $i\in\{1,\dots ,n\}$, the multiplication operators $q_i$ and 
the partial differentiation with respect to $q_i$, acting on polynomials on $\mR^n$
in the variables $q_1,\ldots ,q_n$. The 
root spaces of $\mathfrak{sp}(2n,\mR)$ corresponding to simple positive roots $\alpha_i$ are 
spanned by  $q_{i+1}\partial_{i}$, $i\in\{1,\dots ,n-1\}$, and $\alpha_n$ is 
spanned by $-\frac{1}{2}\partial_n^2$.

The Segal-Shale-Weil representation $\cC$ is the direct sum of two unitary simple 
representations of $Mp(2n,\mR)$ on the vector space $L^2(\mR^{n},d\mu)$, where 
$d\mu =\exp^{-||q||^2} dq_{\mR^{n}}$ with $dq_{\mR^{n}}$ the Lebesgue measure 
on $\mR^{n}$. We take for the basis of $L^2(\mR^{n},d\mu)$ the
space of polynomials on a maximally isotropic subspace $\mR^{n}\subset\mR^{2n}$. 
The differential $L_\star :\mathfrak{mp}(2n,\mR)\to \mathrm{End}(\mathrm{Pol}(\mR^n))$ 
of the action on the Segal-Shale-Weil representation is
\begin{eqnarray}
 && L_\star (e_ie_j) = iq_iq_j\quad\mbox{for}\quad i\not= j,\quad\quad 
    L_\star (e_ie_i) = -\frac{i}{2}q_i^2,
\nonumber \\
 && L_\star (f_{i}f_{j})=i\partial_i\partial_j\quad\mbox{for}\quad i\not= j,\quad\quad 
    L_\star (f_if_i) = -\frac{i}{2}\partial_i^2,
\nonumber \\
 && L_\star (e_if_{j}+f_{j}e_i)=q_j\partial_i+\frac{1}{2}\delta_{ij}
\end{eqnarray}
for $i,j\in\{1,\dots ,n\}$.

The class of finite dimensional irreducible representations of $\mathfrak{sp}(2n,\mR)$ 
coming out 
of the decomposition of $\mathrm{Pol}(\mR^{2n})$ is given by 
symmetric powers $\mathrm{S}^i(\mC^{2n}), i\in\mN_0$, of 
the complexification of the fundamental vector representation $\mR^{2n}$. 
In particular, the $\mathfrak{sp}(2n,\mR)$-module
$\mathrm{S}^i(\mC^{2n})$ is simple with the highest weight 
$i\epsilon_1$, $i\in\mN_0$, see e.g., \cite{fh}.


\section{Decomposition of tensor products of finite dimensional representations and the Segal-Shale-Weil
representation}

Let $\gog$ be a finite dimensional simple complex Lie algebra over $\mC$, $\goh$ its Cartan 
subalgebra and $M$ a simple $\goh$-diagonalizable $\gog$-module having a weight space decomposition
$M=\bigoplus_{\mu\in Weight(M)}M_\mu$ with $Weight(M)\subset\goh^*$ denoting the set of 
weights of $M$. The module $M$ is said to have bounded multiplicities provided there is a natural
number $c\in\mN$ such that $dim(M_\mu)\leq c$ for all $\mu\in Weight(M)$. In this case, the minimal 
value of $c$ is called the degree of the module $M$. Modules of degree $1$ are called completely 
pointed.

In this section we will make explicit, for the purposes of our article, several 
results in \cite{BL} on the decomposition of the tensor product of completely 
pointed highest weight modules with a certain class of 
finite dimensional representations. In particular, we will consider the two 
simple components of the Segal-Shale-Weil representation realized as highest 
weight modules and the
symmetric powers of the fundamental vector representation 
$\mC^{2n}$ of $\gog=\mathfrak{sp}(2n,\mR)$.
Throughout the article, $V(\mu)$ denotes the highest weight module 
with highest weight $\mu$ and
$L(\mu)$ denotes the simple module of highest weight $\mu$, i.e., the quotient of $V(\mu)$ 
by its unique maximal submodule $I(\mu)\subset V(\mu)$.

Let us introduce the set 
\[
\tau^{i}=\{\sum_{j=1}^n d_j \epsilon_j \,|\, (d_j+\delta_{1,i}\delta_{n,j})\in\mN_0,\, 
\sum_{j=1}^n d_{j}=0  \,\, \mbox{mod}\,\, 2\}.
\]
Here $d_j\in\mN_0$ with $i=0$ for $L(-\frac{1}{2}\omega_n)$ and $i=1$ for 
$L(\omega_{n-1}-\frac{3}{2}\omega_n)$, respectively.
Let $\lambda=\sum_{i=1}^{n}\lambda_i\omega_i$ be a dominant integral weight written in 
the basis of fundamental weights $\omega_1,\ldots ,\omega_n$ of $\mathfrak{sp}(2n,\mR)$. 
Define the set of weights 
\begin{eqnarray*}
\tau_\lambda^{i}&=&\{\mu|\lambda-\mu=\sum_{j=1}^nd_j\epsilon_j\in\tau^i,\\
&& 0\leq d_j\leq \lambda_j\, (j=1,\dots ,n-1),
0\leq d_n+\delta_{1,i}\leq 2\lambda_n+1\}.
\end{eqnarray*}
Let us recall a result of \cite{BL} :
\begin{thm}
Let $L(-\frac{1}{2}\omega_n)$ resp. $L(\omega_{n-1}-\frac{3}{2}\omega_n)$ denote the 
simple highest weight modules corresponding to the two irreducible components of the Segal-Shale-Weil
representation. Then for any finite dimensional irreducible representation $F(\lambda)$
with highest weight $\lambda$, we have 
\begin{eqnarray}
L(-\frac{1}{2}\omega_n)\otimes F(\lambda)\simeq \bigoplus_{\mu\in \tau_\lambda^{0}}L(-\frac{1}{2}\omega_n+\mu)
\end{eqnarray}
and 
\begin{eqnarray}
L(\omega_{n-1}-\frac{3}{2}\omega_n)\otimes F(\lambda)\simeq \bigoplus_{\mu\in \tau_\lambda^{1}}L(\omega_{n-1}-\frac{3}{2}\omega_n+\mu).
\end{eqnarray}
In particular, the tensor product is completely reducible and
the decomposition is direct.
\end{thm}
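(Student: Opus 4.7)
The plan is to exploit the fact that $L(-\frac{1}{2}\omega_n)$ and $L(\omega_{n-1}-\frac{3}{2}\omega_n)$ are \emph{completely pointed} simple highest-weight modules: every weight space is one-dimensional, and the set of weights is a single coset of an explicit sublattice of $\goh^{*}$ translated by the highest weight. This rigidity forces very strong constraints on the constituents of the tensor product with any finite-dimensional simple module $F(\lambda)$, and the combinatorial set $\tau_\lambda^i$ is designed precisely to parametrize them.

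First I would confirm the weight description of $L(\nu_0)$ for $\nu_0 \in \{-\frac{1}{2}\omega_n,\ \omega_{n-1}-\frac{3}{2}\omega_n\}$ using the realization on $\mathrm{Pol}(\mR^n)$ given in Section~\ref{sec_2}: the weights are exactly $\nu_0 - \sum_{j=1}^n d_j \epsilon_j$ subject to the conditions $d_j + \delta_{1,i}\delta_{n,j} \in \mN_0$ and $\sum_j d_j \equiv 0 \pmod 2$, each occurring with multiplicity one. Multiplying the resulting formal character $\operatorname{ch} L(\nu_0)$ by $\operatorname{ch} F(\lambda)$ then gives a weight-by-weight expression for $\operatorname{ch}(L(\nu_0)\otimes F(\lambda))$ as a convolution.

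Next, since tensoring a bounded weight module with a finite-dimensional module preserves boundedness, every simple subquotient of $L(\nu_0)\otimes F(\lambda)$ is again a bounded simple weight module of the same parity class, hence (by the classification of such modules for $\mathfrak{sp}(2n,\mR)$ invoked in \cite{BL}) of the form $L(\nu_0 + \mu)$ for some weight $\mu$ of $F(\lambda)$. The admissibility conditions that $\nu_0 + \mu$ lie in the correct coset ($\sum d_j$ even) and that $\mu$ actually occur as a weight of $F(\lambda)$ within the appropriate range translate directly into the two requirements $0 \le d_j \le \lambda_j$ for $j<n$ and $0 \le d_n + \delta_{1,i} \le 2\lambda_n + 1$, which together carve out $\tau_\lambda^i$. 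A weight-multiplicity count along any fixed weight then shows each $L(\nu_0 + \mu)$ with $\mu \in \tau_\lambda^i$ appears exactly once and that no other simple module occurs.

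The hard part will be establishing semisimplicity: for infinite-dimensional simple modules, a tensor product with a finite-dimensional module need not be completely reducible a priori, and one must rule out both non-split extensions between distinct summands and self-extensions. The former is handled by noting that the summands $L(\nu_0+\mu)$ for distinct $\mu \in \tau_\lambda^i$ have pairwise distinct central characters, since the Casimir eigenvalue is an injective function of the highest weight on the relevant coset. The latter is the key structural input from \cite{BL}: the category of bounded simple weight modules is semisimple under tensoring with finite-dimensional modules, which is where the completely pointed hypothesis is indispensable and provides the cleanest path to the claimed direct-sum decomposition.
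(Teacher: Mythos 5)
The paper does not prove this theorem. It is stated with the explicit preface ``Let us recall a result of \cite{BL}'' and is imported, without argument, from Britten and Lemire, Trans.\ Amer.\ Math.\ Soc.\ \textbf{351} (1999). There is therefore no in-paper proof to compare your attempt against; you are being asked to reconstruct a proof of a cited external result.

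Judged on its own, your sketch identifies the right ingredients but is not yet a proof, for two concrete reasons. First, the semisimplicity step is circular: you dispose of possible self-extensions by invoking ``the category of bounded simple weight modules is semisimple under tensoring with finite-dimensional modules, which is the key structural input from [BL].'' But that semisimplicity statement \emph{is} essentially the assertion ``the tensor product is completely reducible and the decomposition is direct'' that the theorem makes; you cannot use it as an input. One would need to actually rule out non-split extensions, not appeal to the conclusion. Second, you assert without verification that distinct $\mu\in\tau_\lambda^i$ give summands with pairwise distinct central (infinitesimal) characters, on the grounds that ``the Casimir eigenvalue is an injective function of the highest weight on the relevant coset.'' That is not automatic: two admissible choices of $(d_1,\dots,d_n)$ could a priori produce $\rho$-shifted highest weights in the same Weyl orbit, so the claimed injectivity would need an explicit computation. (The paper does perform such a $\|\cdot+\delta\|^2$ comparison, but only for the special case $\lambda=k\omega_1$ and only in Section 5 for a different purpose, namely to constrain the target of $X_s$; it is not a general-$\lambda$ argument.) The remaining steps of your outline --- the one-dimensional weight-space description of $L(\nu_0)$, the character convolution, placing $L(\nu_0)\otimes F(\lambda)$ in category $\mathcal{O}$ with bounded multiplicities so that all constituents are bounded highest weight modules in the appropriate coset --- are sound and match the general shape of the Britten--Lemire argument, but the two omissions above are exactly where the substance of that paper lies, so as written the proposal has genuine gaps.
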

Recall that the simple modules appearing in this theorem have 
no singular vectors other than the highest weight ones.  

The consequence of this result is the decomposition of the tensor product 
of $L(-\frac{1}{2}\omega_n)$ resp. $L(\omega_{n-1}-\frac{3}{2}\omega_n)$ with 
symmetric powers $\mathrm{S}^k(\mC^{2n})$, $k\in\mN_0$, of the fundamental vector 
representation $\mC^{2n}$ of $\mathfrak{sp}(2n,\mR)$. Note that these are irreducible 
representations, as mentioned in Section \ref{sec_2}.

\begin{cor}\label{decomposition}
We have for $L(-\frac{1}{2}\omega_n)$
\begin{enumerate}
\item 
In the even case $k=2l$ ($2l+1$ terms on the right-hand side): 
\begin{eqnarray*}
L(-\frac{1}{2}\omega_n)\otimes \mathrm{S}^k(\mC^{2n}) & \simeq & 
L(-\frac{1}{2}\omega_n)\oplus L(\omega_1+\omega_{n-1}-\frac{3}{2}\omega_n) \nonumber\\&&
\oplus L(2\omega_1-\frac{1}{2}\omega_n)\oplus L(3\omega_1+\omega_{n-1}-\frac{3}{2}\omega_n)
\oplus\dots
\nonumber \\
& & \oplus L((2l-1)\omega_1+\omega_{n-1}-\frac{3}{2}\omega_n)\oplus
L(2l\omega_1-\frac{1}{2}\omega_n),
\end{eqnarray*}

\item
In the odd case $k=2l+1$ ($2l+2$ terms on the right-hand side):
\begin{eqnarray*}
L(-\frac{1}{2}\omega_n)\otimes \mathrm{S}^k(\mC^{2n}) & \simeq & 
L(\omega_{n-1}-\frac{3}{2}\omega_n)\oplus L(\omega_1-\frac{1}{2}\omega_n)
\\
&&\oplus L(2\omega_1+\omega_{n-1}-\frac{3}{2}\omega_n) \oplus 
L(3\omega_1-\frac{1}{2}\omega_n)
\oplus\dots
\nonumber \\
& & \oplus L(2l\omega_1+\omega_{n-1}-\frac{3}{2}\omega_n)\oplus
L((2l+1)\omega_1-\frac{1}{2}\omega_n),
\end{eqnarray*}
\end{enumerate}
We have for $L(\omega_{n-1}-\frac{3}{2}\omega_n)$
\begin{enumerate}
\item 
In the even case $k=2l$ ($2l+1$ terms on the right-hand side): 
\begin{eqnarray*}
L(\omega_{n-1}-\frac{3}{2}\omega_n)\otimes \mathrm{S}^k(\mC^{2n}) & \simeq & 
L(\omega_{n-1}-\frac{3}{2}\omega_n)\oplus L(\omega_1-\frac{1}{2}\omega_n)
\\ && \oplus L(2\omega_1+\omega_{n-1}-\frac{3}{2}\omega_n)
\oplus\dots
\nonumber \\
& & \oplus L((2l-1)\omega_1-\frac{1}{2}\omega_n)\oplus
L(2l\omega_1+\omega_{n-1}-\frac{3}{2}\omega_n),
\end{eqnarray*}

\item
In the odd case $k=2l+1$ ($2l+2$ terms on the right-hand side):
\begin{eqnarray*}
L(\omega_{n-1}-\frac{3}{2}\omega_n)\otimes \mathrm{S}^k(\mC^{2n}) & \simeq & 
L(-\frac{1}{2}\omega_n)\oplus L(\omega_1+\omega_{n-1}-\frac{3}{2}\omega_n)
\oplus\dots
\nonumber \\
& & \oplus L(2l\omega_1-\frac{1}{2}\omega_n)\oplus
L((2l+1)\omega_1+\omega_{n-1}-\frac{3}{2}\omega_n).
\end{eqnarray*}
\end{enumerate}
\end{cor}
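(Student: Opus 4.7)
The plan is to apply the preceding theorem directly with $F(\lambda)=\mathrm{S}^k(\mC^{2n})$, whose highest weight is $\lambda=k\omega_1=k\epsilon_1$, and simply enumerate the set $\tau_\lambda^i$. Since $\lambda_1=k$ and $\lambda_2=\cdots=\lambda_n=0$, the inequalities $0\leq d_j\leq\lambda_j$ for $j=2,\dots,n-1$ force $d_j=0$ in that range, and the only nonzero contributions come from $d_1$ and $d_n$. The constraints reduce to $0\leq d_1\leq k$ together with $0\leq d_n+\delta_{1,i}\leq 1$ (since $\lambda_n=0$) and the parity condition $d_1+d_n\equiv 0 \pmod 2$.

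For the component $L(-\tfrac12\omega_n)$ (case $i=0$) one has $d_n\in\{0,1\}$, while for $L(\omega_{n-1}-\tfrac32\omega_n)$ (case $i=1$) one has $d_n\in\{-1,0\}$. Using $\omega_1=\epsilon_1$ and $\epsilon_n=\omega_n-\omega_{n-1}$, I would rewrite a generic summand as
\begin{eqnarray*}
-\tfrac12\omega_n+\mu &=& (k-d_1)\omega_1+d_n\omega_{n-1}+(-\tfrac12-d_n)\omega_n,\\
\omega_{n-1}-\tfrac32\omega_n+\mu &=& (k-d_1)\omega_1+(1+d_n)\omega_{n-1}+(-\tfrac32-d_n)\omega_n.
\end{eqnarray*}
Setting $m:=k-d_1$ and letting $m$ run from $0$ to $k$ then yields, in each case, a summand whose top-weight component is $m\omega_1$ while the tail $d_n\omega_{n-1}+(\cdots)\omega_n$ alternates between the two Segal-Shale-Weil types according to the parity condition on $d_1$.

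Finally, I would split into $k=2l$ and $k=2l+1$ and read off the list by increasing $m$. For instance, for $L(-\tfrac12\omega_n)\otimes\mathrm{S}^{2l}(\mC^{2n})$ the $2l+1$ allowed pairs $(d_1,d_n)$ alternate as $(2l,0),(2l-1,1),(2l-2,0),\dots,(0,0)$, producing exactly $L(-\tfrac12\omega_n)$, $L(\omega_1+\omega_{n-1}-\tfrac32\omega_n)$, $L(2\omega_1-\tfrac12\omega_n)$, and so on up to $L(2l\omega_1-\tfrac12\omega_n)$; the other three cases follow by the same mechanical enumeration.

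The only genuinely delicate point is the bookkeeping: translating the $\tau_\lambda^i$ condition on $(d_1,d_n)$ into fundamental-weight notation, and making sure the parity shift from $\delta_{1,i}\delta_{n,j}$ in the defining set $\tau^i$ is tracked correctly so that the alternation between the two Segal-Shale-Weil components is the right one in each of the four subcases. Once that dictionary is set up, the decomposition is immediate from the theorem.
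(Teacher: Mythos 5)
Your proposal is correct and matches the paper's (implicit) derivation exactly: the corollary is simply the preceding theorem applied to $\lambda=k\omega_1=k\epsilon_1$, so one enumerates $\tau_\lambda^i$ under the constraints $d_2=\cdots=d_{n-1}=0$, $0\le d_1\le k$, $0\le d_n+\delta_{1,i}\le 1$ and $d_1+d_n$ even, then rewrites each $\mu$ in the $\omega$-basis via $\epsilon_1=\omega_1$ and $\epsilon_n=\omega_n-\omega_{n-1}$. Your bookkeeping of the $d_n$-ranges ($\{0,1\}$ for $i=0$, $\{-1,0\}$ for $i=1$) and of the parity alternation correctly reproduces all four lists.
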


As we shall see, a geometrical reformulation of this Corollary in the language of 
differentials operators leads to Theorem \ref{SymplecticFischer}.


\section{Generators of the Howe dual Lie algebra $\mathfrak{sl}(2,\mR)$}
\label{sl2}
Let $(\mR^{2n},\omega)$ be the symplectic vector space with coordinates $x_1,\dots ,x_{2n}$,
coordinate vector fields $\partial_1,\dots ,\partial_{2n}$ and a
symplectic basis $e_1,f_1,\dots ,e_n,f_n$, i.e., 
$$\omega(e_i,e_j)=0, \qquad \omega(f_i,f_j)=0, \qquad \omega(e_i,f_j)=\delta_{ij}$$ 
for all $i,j=1,\dots ,n$. It follows from the action of 
$\mathfrak{sp}(2n,\mR)$ on these vectors that 
\begin{eqnarray}
& & X_s:=\sum_{j=1}^{n} (x_{2j-1} f_{j} + x_{2j}e_{j}),\nonumber\\
& & D_s:=\sum_{j=1}^{n} (\partial_{x_{2j-1}} e_{j} -  \partial_{x_{2j}}f_{j}),\nonumber\\
& & \mE:=\sum_{j=1}^{2n} x_{j}\partial_{x_{j}}
\end{eqnarray} 
are invariant and so will be used as linear maps intertwining the 
$\mathfrak{sp}(2n,\mR)$ action on 
the space $\cP \otimes \cC$ of polynomials on $\mC^{2n}$ valued in the Segal-Shale-Weil 
representation $\cC$, i.e. 
$\cP: = \mathrm{Pol}(\mC^{2n})$ 
and $\cC: = L(-\frac{1}{2}\omega_n)\oplus L(\omega_{n-1}-\frac{3}{2}\omega_n)$. 
The space of homogeneous polynomials of degree $k$ will be denoted by $\cP_{k}$. 
The operator $D_s$
is crucial for the sequel and we call it the symplectic Dirac operator.

It is easy to verify that these operators fulfill the (unnormalized)
$\mathfrak{sl}(2,\mR)$-commutation relations:
\begin{eqnarray}
& & [\mE, D_s]=-D_s,
\nonumber \\
\label{slRels}
& & [\mE, X_s]=X_s,\\
& & [D_s, X_s]=\mE +n.\nonumber 
\end{eqnarray}
The action of $\mathfrak{sp}(2n,\mR)\times \mathfrak{sl}(2,\mR)$ will generate the multiplicity free decomposition
of the representation $\cP \otimes \cC$.

Further we introduce the operator
\begin{equation}
\Gamma_{s} = X_{s} D_{s} - \frac{1}{2} \mE(2n-1 + \mE),
\label{gammaop}
\end{equation}
which is the Casimir operator in $\mathfrak{sl}(2,\mR)$. 
Using \eqref{slRels} it is easy to check that $\Gamma_{s}$ commutes with both $X_{s}$ and $D_{s}$.


\section{Fischer decomposition and homomorphisms of $\mathfrak{sp}(2n,\mR)$-modules appearing 
         in the decomposition of polynomial symplectic spinors}

Before introducing the scheme in its full generality, we start with a few explicit remarks 
concerning the homogeneities zero and one parts in the decomposition. 
The tensor product $L(-\frac{1}{2}\omega_n)\otimes \mC^{2n}$
(analogously, one can consider $L(\omega_{n-1}-\frac{3}{2}\omega_n)\otimes \mC^{2n}$) 
decomposes as a direct sum $V_{1} \oplus V_{2}$ of two invariant subspaces, given by
\begin{eqnarray} 
& & V_1:=\{\sum_{i=1}^{n}e_i s\otimes f_i -\sum_{i=1}^{n}f_i s\otimes e_i|\,\, s\in L(-\frac{1}{2}\omega_n)\},
\nonumber \\
\nonumber
& & V_2:=\{\sum_{i=1}^{n}s_i\otimes e_i+\sum_{j=1}^{n}s_j\otimes f_j;\,\, s_i,s_j\in L(-\frac{1}{2}\omega_n)|\,\,\sum_{i=1}^{n}e_is_i
+\sum_{j=1}^{n}f_js_j =0\}.
\end{eqnarray}
The map 
\begin{eqnarray}
i:& & L(\omega_{n-1}-\frac{3}{2}\omega_n)\to L(-\frac{1}{2}\omega_n)\otimes \mC^n,
\nonumber \\
& & s\mapsto \sum_{i=1}^{n}e_i s\otimes f_i -\sum_{i=1}^{n}f_i s\otimes e_i,
\end{eqnarray}
(resp. $L(-\frac{1}{2}\omega_n)\to L(\omega_{n-1}-\frac{3}{2}\omega_n)\otimes \mC^n$) is 
injective and onto $V_1$. The reason is that the injectivity $i(s)=0$ is equivalent to $e_is=0$ 
resp. $f_is=0$  for all
$i\in\{1,\dots ,n\}$. Now the symplectic Clifford algebra
relation $e_if_{j}-f_{j}e_i=\delta_{ij}$ implies $s=0$ and the result follows. In other words,
the action of $X_s$ induces an isomorphism between two irreducible submodules in the homogeneity 
zero and one. Notice that $e_i, f_i$ act in the representation space
in a consistent way with the metaplectic action, i.e., the Segal-Shale-Weil representation 
extends to an irreducible representation of the semi-direct product of the metaplectic Lie algebra  
and the Heisenberg algebra generated by $e_i, f_i$, $i=1,\dots, n$.

Now we shall apply the tool of representation theory called the infinitesimal character.
The sum of the fundamental weights (or half of the sum of positive roots) for $\mathfrak{sp}(2n,\mR)$
is $\delta =(n,n-1,\dots ,2,1)$. The highest weights of simple 
$\mathfrak{sp}(2n,\mR)$-modules, 
coming from the decomposition of the tensor products of our interest, were determined 
for each homogeneity $k\in\mN_0$ in Corollary \ref{decomposition}. The 
multiplication by $X_s$ gives an intertwining map between neighboring homogeneities, say the
$k$-th  and $(k+1)$-th. Let us determine possible target modules when restricting the
action of $X_s$ to a given simple irreducible 
$\mathfrak{sp}(2n,\mR)$-module $L(a\omega_1-\frac{1}{2}\omega_{n})$ 
with highest weight $a\omega_1-\frac{1}{2}\omega_{n}$ for some $a\leq k$ (the case of 
$L(b\omega_1+\omega_{n-1}-\frac{3}{2}\omega_{n})$ being completely analogous). The comparison of 
infinitesimal characters of the collection of weights 
$\{\mu_a=a\omega_1-\frac{1}{2}\omega_{n},\nu_b=b\omega_1+\omega_{n-1}-\frac{3}{2}\omega_{n}\}$ 
($a,b\in\mN_0$) yields that 
$$||\mu_a+\delta||^2=||\nu_b+\delta||^2$$ 
if and only if either
\begin{enumerate}
\item
$2a+n-\frac{1}{2}=2b+n-\frac{1}{2}$, which implies $a=b$, or
\item
$2a+n-\frac{1}{2}=-(2b+n-\frac{1}{2})$, i.e., $a+b=-n+\frac{1}{2}$
and there is no solution in this case.
\end{enumerate}
It remains to prove that the image of $X_s$, when restricted to a 
simple module in the $k$-th homogeneity, is nonzero (or, as follows from the irreducibility,
is the simple module in the $(k+1)$-th column with the same infinitesimal 
character.)  

To complete this line of reasoning, we employ the Lie algebra $\mathfrak{sl}(2,\mR)$ from 
Section \ref{sl2}. To illustrate its impact explicitly, we start in homogeneity 
zero with the simple module $L(-\frac{1}{2}\omega_n)$ and
assume that it is in $\mathrm{Ker}(X_s)$. Because it is in $\mathrm{Ker}(D_s)$, it is in the 
kernel of the commutator $[D_s,X_s]=\mE +n$. However, $\mE +n$ acts in the homogeneity
zero by $n$, which is the required contradiction and so $X_s$ acts as an 
isomorphism $L(-\frac{1}{2}\omega_n)\to L(\omega_{n-1}-\frac{1}{2}\omega_n)$.
Let us now consider the action of $X_s$ on $L(\omega_{n-1}-\frac{1}{2}\omega_n)$
sitting in the homogeneity one part, and assume it acts trivially. Then due 
to the previous isomorphism, this kernel is $\mathrm{Ker}(X_s^2)$ when $X_s^2$ is acting on   
$L(-\frac{1}{2}\omega_n)$ in homogeneity zero. As before, the commutator 
$[X_s^2,D_s]$ acts by zero. Because it is equal to $-X_s (\mE+n)-(\mE+n)  X_s$, it acts
on homogeneity zero elements by $-(2n+1)X_s$ and due to the fact that $X_s$
is an isomorphism, it is nonzero and so yields the contradiction. In conclusion,
$X_s:L(\omega_{n-1}-\frac{1}{2}\omega_n)\to L(-\frac{1}{2}\omega_n)$ acting
between homogeneity one and two elements is an isomorphism. Clearly, one can iterate 
the procedure further using the subsequent Lemma \ref{ActSympl} and 
Lemma \ref{calcSympDirac} on $\mathfrak{sp}(2n,\mR)$-invariant
intertwining operators acting on the direct sum of simple highest weight 
$\mathfrak{sp}(2n,\mR)$-modules. An analogous induction procedure can be used to prove the 
isomorphic action of 
$D_s$.

In what follows, we turn the previous qualitative observation into 
a more quantitative statement. Following the decomposition of 
$\cP_{l} \otimes \cC$ in Corollary \ref{decomposition}, we first 
introduce the concept of symplectic monogenic polynomial:
\begin{defn}
Denote by ${\fam2 M}^+_l$ resp. ${\fam2 M}^-_l$ the 
simple $\mathfrak{sp}(2n,\mR)$-modules 
with highest weight $L(l\omega_1-\frac{1}{2}\omega_n)$ 
resp. $L(l\omega_1+\omega_{n-1}-\frac{3}{2}\omega_n))$,
and call them {\bf symplectic monogenics} of degree $l$ 
(or $l$-homogeneous symplectic monogenics). 
\end{defn}

We put 
${\fam2 M}_l:={\fam2 M}^+_l\oplus {\fam2 M}^-_l$. Based on our previous discussion, this space is characterized by
\[
\cM_{l} = \mathrm{Ker}{(D_{s})} \cap (\cP_{l} \otimes \cC)
\]
and the name symplectic monogenic is hence justified by analogy with the orthogonal 
case (see e.g., \cite{howeBDES, Orsted}).

We then obtain two auxiliary lemmas.

\begin{lem}
\label{ActSympl}
Suppose $M_{\ell} \in \cM_{\ell}$ is a symplectic monogenic of degree $\ell$. Then
\[
D_{s} (X_{s}^{k}M_{\ell}) = \frac{1}{2}k(2n+2l+k-1)X_{s}^{k-1}M_{\ell}.
\]
\end{lem}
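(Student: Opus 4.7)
The plan is to use induction on $k$, exploiting the $\mathfrak{sl}(2,\mR)$--commutation relations \eqref{slRels} together with the two defining features of a symplectic monogenic of degree $\ell$: namely $D_{s} M_{\ell}=0$ and $\mE M_{\ell}=\ell M_{\ell}$. No input beyond these two facts and the commutators $[\mE,D_s]=-D_s$, $[\mE,X_s]=X_s$, $[D_s,X_s]=\mE+n$ should be needed.

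For the base case $k=0$ both sides are zero, and for $k=1$ one computes directly
\[
D_s X_s M_\ell = [D_s, X_s] M_\ell + X_s D_s M_\ell = (\mE+n) M_\ell = (\ell+n) M_\ell,
\]
which agrees with $\tfrac{1}{2}(2n+2\ell)M_\ell$. For the inductive step, assuming the formula at level $k-1$, I would write
\[
D_s X_s^{k} M_\ell = X_s\bigl(D_s X_s^{k-1} M_\ell\bigr) + [D_s,X_s]\, X_s^{k-1} M_\ell,
\]
apply the induction hypothesis to the first summand, and use homogeneity to evaluate $(\mE+n) X_s^{k-1}M_\ell = (n+\ell+k-1) X_s^{k-1}M_\ell$ on the second summand (noting that $X_s$ raises degree by $1$, so $X_s^{k-1}M_\ell$ is homogeneous of degree $\ell+k-1$).

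Collecting, the coefficient of $X_s^{k-1}M_\ell$ becomes
\[
\tfrac{1}{2}(k-1)(2n+2\ell+k-2) + (n+\ell+k-1),
\]
and it remains only to verify the elementary identity
\[
(k-1)(2n+2\ell+k-2) + 2(n+\ell+k-1) = k(2n+2\ell+k-1),
\]
which yields the claimed coefficient $\tfrac{1}{2}k(2n+2\ell+k-1)$. The argument has no real obstacle; the only mildly delicate point is keeping track of the homogeneity of $X_s^{k-1}M_\ell$, but this is automatic from $[\mE,X_s]=X_s$ applied $k-1$ times to $M_\ell$.
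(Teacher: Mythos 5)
Your proof is correct and fills in exactly the inductive argument that the paper alludes to with the phrase ``a straightforward proof follows by induction.'' The base case, the use of $D_s X_s = X_s D_s + (\mE+n)$, the homogeneity count $\mE\, X_s^{k-1}M_\ell = (\ell+k-1)X_s^{k-1}M_\ell$, and the closing coefficient identity are all in order.
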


\begin{proof}
A straightforward proof follows by induction.
\end{proof}

\begin{lem}
\label{calcSympDirac}
Suppose $M_{\ell} \in \cM_{\ell}$ is a symplectic monogenic of degree $\ell$. Then
\[
D_{s}^{j} (X_{s}^{k}M_{\ell}) = c_{j,k,\ell} X_{s}^{k-j}M_{\ell}
\]
with
\[
c_{j,k,l} = \left\{ \begin{array}{ll}\dfrac{1}{2^{j}} \dfrac{k!}{(k-j)!} \dfrac{(2n+2l+k-1)!}{(2n+2l+k-j-1)!}&j \leq k,\\0&j>k. \end{array}\right.
\]
\end{lem}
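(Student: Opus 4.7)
The plan is a straightforward induction on $j$, using Lemma \ref{ActSympl} as the engine at each step. I would first treat the trivial case $j=0$, where the formula reduces to $c_{0,k,\ell}=1$ (empty product) and the identity $D_s^0(X_s^k M_\ell) = X_s^k M_\ell$ is tautological.

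For the inductive step, assume the claim for some $j < k$; then I would compute
\[
D_s^{j+1}(X_s^k M_\ell) = D_s\bigl(c_{j,k,\ell}\, X_s^{k-j}M_\ell\bigr) = c_{j,k,\ell}\, D_s(X_s^{k-j}M_\ell),
\]
and apply Lemma \ref{ActSympl} with $k$ replaced by $k-j$ to get
\[
D_s(X_s^{k-j}M_\ell) = \tfrac{1}{2}(k-j)\bigl(2n+2\ell+k-j-1\bigr)\, X_s^{k-j-1}M_\ell.
\]
Multiplying constants and comparing with the closed form of $c_{j+1,k,\ell}$ amounts to checking the telescoping identities $\tfrac{k!}{(k-j)!}\cdot (k-j)=\tfrac{k!}{(k-j-1)!}$ and $\tfrac{(2n+2\ell+k-1)!}{(2n+2\ell+k-j-1)!}\cdot (2n+2\ell+k-j-1)=\tfrac{(2n+2\ell+k-1)!}{(2n+2\ell+k-j-2)!}$, along with the factor $\tfrac{1}{2}$ bumping $2^{-j}$ to $2^{-(j+1)}$. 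This gives the desired recursion and closes the induction for $j \le k$.

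For the case $j > k$, I would observe that at $j=k$ the formula gives $D_s^k(X_s^k M_\ell) = c_{k,k,\ell}\, M_\ell$, a scalar multiple of a symplectic monogenic. One further application of $D_s$ then yields zero because $M_\ell \in \mathrm{Ker}(D_s)$ by the definition of $\cM_\ell$, and every subsequent application trivially preserves the zero.

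No genuine obstacle arises here; the statement is essentially a bookkeeping corollary of Lemma \ref{ActSympl}. The only mild subtlety is ensuring the factorial rewriting in the coefficient is valid for all $0\le j \le k$ (in particular at $j=k$, where $(k-j)!=0!=1$ and $(2n+2\ell+k-j-1)!=(2n+2\ell-1)!$ both remain well-defined), so that the induction does not break down at its final step before the kernel property of $M_\ell$ takes over.
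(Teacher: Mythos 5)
Your proof is correct and is precisely the "$j$ iterations of Lemma \ref{ActSympl}" that the paper's (very terse) proof appeals to, spelled out as an induction on $j$ with the telescoping of the factorial coefficients checked explicitly and the $j>k$ case handled via $D_s M_\ell = 0$. Same approach, just written out in full.
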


\begin{proof}
The lemma follows from $j$ iterations of Lemma \ref{ActSympl}.
\end{proof}

The previous considerations can be summarized in the symplectic analogue of the classical theorem 
on separation of variables in the orthogonal case, see for example \cite{Orsted} and 
the references therein.
\begin{thm} 
\label{SymplecticFischer}
The space $\cP\otimes \cC$ decomposes under the action of 
$\mathfrak{sl}(2,\mR)$ into the direct sum of simple
weight $\mathfrak{sp}(2n,\mR)$-modules
\[
\bigoplus_{l=0}^\infty\bigoplus_{j=0}^\infty X_s^j{\fam2 M}_l,
\]
where we used the notation ${\fam2 M}_l:={\fam2 M}^+_l\oplus {\fam2 M}^-_l$. The decomposition takes
the form of an infinite triangle \[
\xymatrix@=11pt{\cP_0 \otimes \cC  &  \cP_1 \otimes \cC & \cP_2 \otimes \cC & \cP_3 \otimes \cC \ar@{=}[d] & \cP_4 \otimes \cC & \cP_5 \otimes \cC  &\ldots \\
\cM_0 \ar[r] & X_s \cM_0 \ar[r] & X_s^2 \cM_0 \ar[r] & X_s^3 \cM_0 \ar @{} [d] |{\oplus}
 \ar[r] & X_s^4 \cM_0 \ar[r] & X_s^5 \cM_0 &\ldots\\
&\cM_1 \ar[r] & X_s \cM_1 \ar[r] & X_s^2 \cM_1 \ar @{} [d] |{\oplus}
 \ar[r] & X_s^3 \cM_1 \ar[r] & X_s^4 \cM_1  &\ldots\\
&&\cM_2 \ar[r] & X_s \cM_2 \ar @{} [d] |{\oplus}
 \ar[r] & X_s^2 \cM_2 \ar[r] & X_s^3 \cM_2  &\ldots\\
&&&\cM_3 \ar[r] & X_s \cM_3 \ar[r] & X_s^2 \cM_3  &\ldots\\
&&&&\cM_4 \ar[r] & X_s \cM_4   &\ldots\\
&&&&&\cM_5&\ldots
}
\]
where all summands are simple weight $\mathfrak{sp}(2n,\mR)$-modules. 
The $k$-th column gives the decomposition of the space of homogeneous 
polynomials of degree 
$k$ taking values in $\cC = L(-\frac{1}{2}\omega_n)\oplus L(\omega_{n-1}-\frac{3}{2}\omega_n)$. 
The $l$-th row forms a lowest weight 
$\mathfrak{sl}(2,\mR)$-module $\oplus_{j=0}^\infty X_s^j{\fam2 M}_l$ generated by the space 
of symplectic monogenics ${\fam2 M}_l$.
\end{thm}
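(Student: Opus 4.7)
The plan is to proceed by induction on the polynomial degree $k$, using Corollary~\ref{decomposition} to control the simple constituents of each column $\cP_k\otimes\cC$ and Lemma~\ref{calcSympDirac} to establish injectivity of the raising operator $X_s$. The crucial input from Corollary~\ref{decomposition} is that $\cP_k\otimes\cC$ is multiplicity-free: it decomposes into $2(k+1)$ pairwise non-isomorphic simple $\mathfrak{sp}(2n,\mathbb{R})$-summands, with highest weights $m\omega_1-\tfrac{1}{2}\omega_n$ and $m\omega_1+\omega_{n-1}-\tfrac{3}{2}\omega_n$ for $m=0,1,\dots,k$; in particular the pair $\{\cM_k^+,\cM_k^-\}$ appears first in column $k$ and persists in every later column.

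First I would establish injectivity of $X_s$: for any nonzero $M_\ell\in\cM_\ell$ and any $j\ge 0$, the image $X_s^j M_\ell$ is nonzero. Apply $D_s^j$ and invoke Lemma~\ref{calcSympDirac}, which yields $D_s^j X_s^j M_\ell = c_{j,j,\ell}M_\ell$ with $c_{j,j,\ell}>0$. Since $X_s$ is $\mathfrak{sp}(2n,\mathbb{R})$-equivariant, each $X_s^j\cM_\ell^\pm$ is then a simple submodule of $\cP_{\ell+j}\otimes\cC$ isomorphic to $\cM_\ell^\pm$.

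For the inductive step, the base case $k=0$ is immediate since $\cP_0\otimes\cC=\cC=\cM_0^+\oplus\cM_0^-=\cM_0$ and $D_s$ annihilates constants. Assuming the decomposition in columns $0,\dots,k-1$, applying $X_s$ to $\cP_{k-1}\otimes\cC$ produces $\bigoplus_{\ell+j=k,\,j\ge 1} X_s^j\cM_\ell$, a direct sum of $2k$ pairwise non-isomorphic simple submodules of $\cP_k\otimes\cC$ whose isomorphism types are read off inductively. By Corollary~\ref{decomposition}, column $k$ contains exactly $2(k+1)$ simple summands, so precisely two remain uncovered by the image of $X_s$; by inspection of the weight list these must be $\cM_k^+$ and $\cM_k^-$, because these are the only simples in column $k$ whose highest weights do not already appear in column $k-1$. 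Equivariance of $D_s$ together with this absence forces $D_s|_{\cM_k^\pm}=0$, so these "new" simples lie in $\ker D_s\cap(\cP_k\otimes\cC)$; combined with $\cM_k=\cM_k^+\oplus\cM_k^-$ by definition, multiplicity-freeness then gives $\cP_k\otimes\cC=\cM_k\oplus\bigoplus_{\ell+j=k,\,j\ge 1} X_s^j\cM_\ell=\bigoplus_{\ell+j=k} X_s^j\cM_\ell$, closing the induction. The row statement follows at once: $\mathbb{E}$ acts diagonally and \eqref{slRels} together with $D_s\cM_\ell=0$ and Lemma~\ref{ActSympl} identify $\bigoplus_{j\ge 0} X_s^j\cM_\ell$ as a lowest weight $\mathfrak{sl}(2,\mathbb{R})$-module generated by $\cM_\ell$.

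The main obstacle I anticipate is the bookkeeping required to verify that the $2k$ simples produced by applying $X_s$ to column $k-1$ correspond, as isomorphism classes, to exactly $2k$ of the $2(k+1)$ simples listed in column $k$ of Corollary~\ref{decomposition}, leaving precisely $\cM_k^\pm$ uncovered. The infinitesimal character calculation preceding Lemma~\ref{ActSympl} is what makes this matching rigorous: $X_s$ can only intertwine simple modules of matching infinitesimal character, and under the two simple components of $\cC$ this pairing is unique, ruling out any ambiguity in the correspondence between columns.
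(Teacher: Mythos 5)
Your proposal is correct and in substance follows the paper's own route: both rely on Corollary~\ref{decomposition} for the column-by-column decomposition and on the $\mathfrak{sl}(2,\mR)$ relations (packaged in Lemma~\ref{calcSympDirac}) to get injectivity of $X_s$ on monogenics. The only real organizational difference is that you replace the paper's explicit infinitesimal-character computation with a cleaner counting argument: by the multiplicity-freeness of $\cP_k\otimes\cC$ (each of the $2(k+1)$ highest weights $m\omega_1-\tfrac12\omega_n$, $m\omega_1+\omega_{n-1}-\tfrac32\omega_n$, $0\le m\le k$, occurs exactly once) the injective equivariant image $X_s(\cP_{k-1}\otimes\cC)$ accounts for the $2k$ constituents with $m\le k-1$, so the complement is forced to be $\cM_k^\pm$ and, since no constituent of that isomorphism type exists in column $k-1$, equivariance of $D_s$ kills it. This is a mild streamlining rather than a genuinely new argument; it buys you a slightly tighter induction at the cost of leaning entirely on the precision of the highest-weight list in Corollary~\ref{decomposition}, which the paper's infinitesimal-character check serves to double-confirm.
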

Notice that beside the unitary Segal-Shale-Weil representation, all remaining 
simple weight modules ${\fam2 M}_l$, $l=1,2,\dots$, appearing in the Fischer decomposition, are
not unitarizable. This is a direct consequence of the classification of unitarizable highest weight modules, \cite{ehw}.

One immediate Corollary is the structure of polynomial solutions of the symplectic Dirac operator 
on $\mR^{2n}$. The statement is given for both symplectic spin modules
$L(-\frac{1}{2}\omega_n)$ and $L(\omega_{n-1}-\frac{3}{2}\omega_n)$ separately.
\begin{cor}
\label{symkernel}
The kernel of (half of) the symplectic Dirac operator $D_s$ acting 
on $L(-\frac{1}{2}\omega_n)$-valued polynomials is 
\begin{eqnarray*} 
\mathrm{Ker}^+(D_s)\simeq \bigoplus_{l\in\mN_0}
\left(L(2l\omega_1-\frac{1}{2}\omega_n)\oplus L((2l+1)\omega_1-\frac{1}{2}\omega_n)\right).
\end{eqnarray*}
The kernel of (half of) the symplectic Dirac operator $D_s$ acting 
on $L(\omega_{n-1}-\frac{3}{2}\omega_n)$-valued polynomials is 
\begin{eqnarray*} 
\mathrm{Ker}^-(D_s)\simeq \bigoplus_{l\in\mN_0} \left(
L(2l\omega_1+\omega_{n-1}-\frac{3}{2}\omega_n)\oplus L((2l+1)\omega_1+\omega_{n-1}-\frac{3}{2}\omega_n)\right).
\end{eqnarray*}
\end{cor}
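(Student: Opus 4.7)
The plan is to derive the corollary from Theorem \ref{SymplecticFischer} in two steps: first pin down $\mathrm{Ker}(D_s)$ inside the full space $\cP\otimes\cC$, and then split the answer along the two irreducible summands of $\cC$.

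For the first step I would combine the Fischer decomposition with Lemma \ref{ActSympl}. Every element of $\cP\otimes\cC$ admits a unique expansion $\sum_{l,j\geq 0}X_s^{j}M_{l,j}$ with $M_{l,j}\in\cM_l$, and Lemma \ref{ActSympl} yields
\[
D_s(X_s^{j}M_{l,j})=\tfrac{1}{2}j(2n+2l+j-1)\,X_s^{j-1}M_{l,j}.
\]
The scalar is strictly positive whenever $j\geq 1$, so directness of the Fischer decomposition forces $\mathrm{Ker}(D_s)=\bigoplus_{l\in\mN_0}\cM_l=\bigoplus_{l\in\mN_0}(\cM_l^{+}\oplus\cM_l^{-})$.

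For the second step I would locate each $\cM_l^{\pm}$ inside the splitting $\cP_l\otimes\cC=(\cP_l\otimes L(-\tfrac{1}{2}\omega_n))\oplus(\cP_l\otimes L(\omega_{n-1}-\tfrac{3}{2}\omega_n))$. By Corollary \ref{decomposition}, the simple module $L(l\omega_1-\tfrac{1}{2}\omega_n)$ appears exactly once in $L(-\tfrac{1}{2}\omega_n)\otimes\mathrm{S}^l(\mC^{2n})$ (as the top summand) and not at all in $L(\omega_{n-1}-\tfrac{3}{2}\omega_n)\otimes\mathrm{S}^l(\mC^{2n})$. Since Theorem \ref{SymplecticFischer} also exhibits $L(l\omega_1-\tfrac{1}{2}\omega_n)$ with multiplicity one in $\cP_l\otimes\cC$ (namely as $\cM_l^{+}$), the two copies must coincide, so $\cM_l^{+}\subset\cP_l\otimes L(-\tfrac{1}{2}\omega_n)$; the symmetric argument gives $\cM_l^{-}\subset\cP_l\otimes L(\omega_{n-1}-\tfrac{3}{2}\omega_n)$. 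Intersecting the total kernel with $\cP\otimes L(-\tfrac{1}{2}\omega_n)$ therefore yields
\[
\mathrm{Ker}^{+}(D_s)=\bigoplus_{l\in\mN_0}\cM_l^{+}=\bigoplus_{l\in\mN_0}L(l\omega_1-\tfrac{1}{2}\omega_n),
\]
which, upon separating $l$ into even and odd values, reproduces the stated formula; the $L(\omega_{n-1}-\tfrac{3}{2}\omega_n)$-valued statement is obtained by swapping $+\leftrightarrow-$ throughout.

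The main obstacle is the locating step. Theorem \ref{SymplecticFischer} alone does not record which of the two Segal--Shale--Weil pieces houses $\cM_l^{\pm}$, and this matters because the Clifford generators $e_j,f_j$ (acting on $\mathrm{Pol}(\mR^n)$ as $iq_j$ and $i\partial_j$) exchange even- and odd-degree polynomials, so $X_s$ and $D_s$ swap $\cP\otimes L(-\tfrac{1}{2}\omega_n)$ with $\cP\otimes L(\omega_{n-1}-\tfrac{3}{2}\omega_n)$; a wrong placement of $\cM_l^{\pm}$ would therefore interchange $\mathrm{Ker}^{+}(D_s)$ with $\mathrm{Ker}^{-}(D_s)$. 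The multiplicity-one matching supplied by Corollary \ref{decomposition} is precisely what removes this ambiguity.
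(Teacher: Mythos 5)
Your proposal is correct and follows the reasoning the paper leaves implicit (the corollary is stated as an ``immediate'' consequence of Theorem~\ref{SymplecticFischer} with no written proof). You identify $\mathrm{Ker}(D_s)=\bigoplus_l\cM_l$ via Lemma~\ref{ActSympl} and then correctly use the multiplicity-one matching against Corollary~\ref{decomposition} to place $\cM_l^{+}$ in $\cP_l\otimes L(-\tfrac{1}{2}\omega_n)$ and $\cM_l^{-}$ in $\cP_l\otimes L(\omega_{n-1}-\tfrac{3}{2}\omega_n)$, which is exactly the bookkeeping the paper relies on; your closing remark about why the locating step is genuinely needed (since $X_s,D_s$ interchange the two Segal--Shale--Weil components) is a worthwhile clarification that the paper does not spell out.
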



Every homogeneous polynomial of degree $k$ valued in $\cC$, can now 
be decomposed into monogenic components as follows.
\begin{thm}
\label{thmReprHomK}
Let $p\in \cP_{k} \otimes \cC$.  Then there exists a unique representation of $p$ as
\[
p=\sum_{i=0}^k p_i,
\]
where $p_i=X_s^{k-i}m_i$ and $m_i\in {\fam2 M}_i$. 
\end{thm}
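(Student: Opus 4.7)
Theorem \ref{thmReprHomK} amounts to reading off the $k$-th column of the Fischer triangle in Theorem \ref{SymplecticFischer}, so structurally the decomposition is already present. Nevertheless I would give a direct, self-contained proof by induction on $k$, because Lemmas \ref{ActSympl} and \ref{calcSympDirac} provide an explicit inversion of $D_s$ on each summand $X_s^{k-i}\cM_i$, which converts the statement into a concrete algorithm for extracting the monogenic components and delivers uniqueness at the same time.

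For existence, induct on $k$. When $k=0$ we have $\cP_0\otimes\cC=\cC=\cM_0$, since $D_s$ strictly lowers polynomial degree. For $k\geq 1$, pick $p\in \cP_k\otimes\cC$ and apply $D_s$; since $D_s p\in \cP_{k-1}\otimes\cC$, the inductive hypothesis gives a decomposition
\[
D_s p=\sum_{i=0}^{k-1} X_s^{k-1-i}\,m'_i,\qquad m'_i\in\cM_i.
\]
By Lemma \ref{ActSympl}, $D_s(X_s^{k-i}m_i)=\tfrac{1}{2}(k-i)(2n+k+i-1)\,X_s^{k-1-i}m_i$, and the scalar $(k-i)(2n+k+i-1)$ is nonzero for $0\leq i\leq k-1$ and $n\geq 1$. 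Hence setting
\[
m_i:=\frac{2\,m'_i}{(k-i)(2n+k+i-1)},\qquad q:=\sum_{i=0}^{k-1} X_s^{k-i}\,m_i,
\]
we obtain $D_s q=D_s p$, so $m_k:=p-q$ lies in $\mathrm{Ker}(D_s)\cap(\cP_k\otimes\cC)=\cM_k$, and $p=\sum_{i=0}^{k} X_s^{k-i}m_i$ is the desired representation.

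For uniqueness, suppose $\sum_{i=0}^{k} X_s^{k-i}m_i=0$ with $m_i\in\cM_i$. Apply $D_s^{k}$: by Lemma \ref{calcSympDirac}, every term with $i\geq 1$ vanishes (since then $k>k-i$), while the $i=0$ term contributes $c_{k,k,0}\,m_0$ with $c_{k,k,0}=\tfrac{k!}{2^k}\cdot\tfrac{(2n+k-1)!}{(2n-1)!}\neq 0$, forcing $m_0=0$. Applying $D_s^{k-1}$ to the reduced sum isolates the $i=1$ term in the same way and yields $m_1=0$; iterating with $D_s^{k-j}$ for $j=2,\dots,k$ forces every $m_j=0$.

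The only point where anything could go wrong is the nonvanishing of the coefficients produced by the two lemmas, and this is manifest from their explicit positive-integer factorizations for $n\geq 1$. Thus the triangular action of $D_s$ with respect to the filtration by $X_s$-powers gives both invertibility of $D_s$ off its kernel in the existence step and a cascade of vanishing moments in the uniqueness step, with no substantive obstacle remaining.
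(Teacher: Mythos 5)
Your proof is correct, and it takes a genuinely different route from the paper. The paper offers no dedicated argument for Theorem \ref{thmReprHomK}: it is presented as an immediate reading of the $k$-th column of the Fischer triangle in Theorem \ref{SymplecticFischer}, which in turn rests on the representation-theoretic input of Corollary \ref{decomposition} (the Britten--Lemire tensor product decomposition), the infinitesimal character comparison, and the $\mathfrak{sl}(2,\mR)$ bookkeeping showing $X_s$ is an isomorphism between neighbouring homogeneities. Your proof sidesteps all of that: it uses only the characterization $\cM_\ell = \mathrm{Ker}(D_s)\cap(\cP_\ell\otimes\cC)$ together with Lemmas \ref{ActSympl} and \ref{calcSympDirac}. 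The existence step correctly applies induction on degree, inverts $D_s$ on each $X_s^{k-1-i}\cM_i$ (the scalar $\tfrac12(k-i)(2n+k+i-1)$ is indeed nonzero for $0\le i\le k-1$, $n\ge1$), and identifies the residue $p-q$ as the top monogenic $m_k$; the uniqueness step correctly cascades $D_s^k, D_s^{k-1},\dots$ and invokes $c_{j,j,\ell}\neq 0$ while $c_{j,k,\ell}=0$ for $j>k$. The trade-off is transparent: the paper's route delivers the finer $\mathfrak{sp}(2n,\mR)$-structure (each $\cM_\ell$ as a sum of two named simple highest-weight modules, simplicity of each summand), whereas your argument is elementary and constructive, yielding a recursive algorithm for extracting the $m_i$ that in spirit anticipates the explicit projection operators $\pi^k_i$ the paper constructs immediately afterward.
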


We now proceed to construct projection operators that allow to explicitly compute 
the decomposition given in Theorem \ref{thmReprHomK}. They are given in the following theorem.

\begin{thm}
The operators
\begin{equation}
\pi^{k}_{i} = \sum_{j=0}^{k-i} a^{i,k}_{j} X^{i+j}_{s} D^{i+j}_{s}\in\mathrm{End}(\cP \otimes \cC)
\label{projversion1}
\end{equation}
with
\[
a^{i,k}_{j} = (-1)^{j} (2n+2k-2i-1) \frac{2^{i+j}}{i! j!} \frac{(2n+2k-2i-j-2)!}{(2n+2k-i-1)}
\]
and $i=0, \ldots, k$ satisfy
\[
\pi^{k}_{i} (X^{j}_{s} \cM_{k-j}) = \delta_{ij} X^{i}_{s} \cM_{k-i}.
\]
\end{thm}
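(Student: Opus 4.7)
By the uniqueness part of Theorem \ref{thmReprHomK}, it suffices to verify the asserted identity $\pi^{k}_{i}(X_{s}^{j'} \cM_{k-j'}) = \delta_{ij'} X_{s}^{i} \cM_{k-i}$ separately on each Fischer summand $X_{s}^{j'} \cM_{k-j'}$ with $j' \in \{0, 1, \ldots, k\}$. The plan is to rewrite $\pi^{k}_{i}$ on this summand as multiplication by an explicit scalar depending on $i, k, j'$ via Lemma \ref{calcSympDirac}, and then to verify that the explicit coefficients $a^{i,k}_j$ collapse this scalar to $\delta_{ij'}$.

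By Lemma \ref{calcSympDirac}, for any $m \in \cM_{k-j'}$ we have $X_{s}^{i+j} D_{s}^{i+j}(X_{s}^{j'} m) = c_{i+j, j', k-j'} X_{s}^{j'} m$ when $i+j \leq j'$ and zero otherwise, so
\[
\pi^{k}_{i}(X_{s}^{j'} m) = \lambda^{i,k}_{j'} X_{s}^{j'} m, \qquad \lambda^{i,k}_{j'} := \sum_{j=0}^{j'-i} a^{i,k}_{j} \, c_{i+j, j', k-j'},
\]
with the convention that $\lambda^{i,k}_{j'} = 0$ when $j' < i$. This disposes of the case $j' < i$ at once. For $j' = i$ only the $j = 0$ term contributes, and after substituting the explicit values of $a^{i,k}_{0}$ and $c_{i,i,k-i}$ and simplifying the resulting ratio of factorials one verifies $\lambda^{i,k}_{i} = 1$, so $\pi^{k}_{i}$ acts as the identity on $X_{s}^{i} \cM_{k-i}$.

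The heart of the argument is the vanishing $\lambda^{i,k}_{j'} = 0$ in the remaining case $j' > i$. Setting $l := j' - i \geq 1$ and $N := 2n+2k$ for brevity, substitution of the explicit formulas and extraction of all factors independent of $j$ reduces the statement to
\[
\sum_{j=0}^{l} (-1)^{j} \binom{l}{j} \frac{(N-2i-j-2)!}{(N-2i-l-j-1)!} = 0.
\]
The key observation is that the ratio of factorials is a product of exactly $l-1$ consecutive linear factors in $j$, hence a polynomial $P(j)$ of degree $l-1$ in $j$. The identity then follows from the classical finite-difference identity $\sum_{j=0}^{l} (-1)^{j} \binom{l}{j} P(j) = 0$ for every polynomial $P$ of degree strictly less than $l$. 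I expect the main obstacle to be precisely this combinatorial bookkeeping: one has to carefully regroup the factorials appearing in $a^{i,k}_j$ and $c_{i+j,j',k-j'}$ so as to isolate the binomial coefficient $\binom{l}{j}$ cleanly and confirm that the remaining $j$-dependent factor has the claimed polynomial degree; once this reduction is done, the vanishing is automatic.
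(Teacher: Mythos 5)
Your proposal is correct and follows essentially the same route as the paper: both use Lemma~\ref{calcSympDirac} to reduce the claim $\pi^k_i(X^{j'}_s\cM_{k-j'})=\delta_{ij'}X^i_s\cM_{k-i}$ to the vanishing of a binomial sum of the form $\sum_j(-1)^j\binom{l}{j}\frac{(\alpha-j)!}{(\alpha-(l-1)-j)!}$. The organizational difference is that the paper \emph{derives} the coefficients $a^{i,k}_j$ iteratively from the constraints, conjectures a closed form, and proves it by induction (with the binomial identity appearing in the inductive step), while you simply \emph{verify} the given closed form directly for each summand — which avoids the induction. Your finite-difference justification (the factorial ratio is a polynomial of degree $l-1$ in $j$, annihilated by the $l$-th difference operator) is a clean, self-contained substitute for the paper's appeal to a thesis lemma or to Gauss's hypergeometric theorem, and is precisely the ``direct'' argument the paper alludes to.

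Two small bookkeeping points you should make explicit if writing this out in full. First, the formula for $a^{i,k}_j$ in the theorem statement has an evident typo: the denominator factor $(2n+2k-i-1)$ should read $(2n+2k-i-1)!$, as is clear from the paper's computation of $a^{i,k}_0 = 1/c_{i,i,k-i}$; your calculation implicitly uses the corrected version. Second, to turn the claim $\pi^k_i(X^{j'}_s\cM_{k-j'})=\delta_{ij'}X^i_s\cM_{k-i}$ into the full conclusion ``$\pi^k_i$ projects onto $X^i_s\cM_{k-i}$ in $\cP_k\otimes\cC$'' one does need to invoke the decomposition of Theorem~\ref{thmReprHomK}, as you note at the outset — but for the literal statement as written that invocation is not needed, since the identity on each summand is all that is asserted.
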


\begin{proof}
Using Lemma \ref{calcSympDirac} it is easy to see that $\pi^{k}_{i} (X^{j}_{s} \cM_{k-j}) =0$ for all $j<i$. The coefficients $a^{i,k}_{j}$, for fixed $i$ and $k$, can now be determined iteratively. First of all, expressing $\pi^{k}_{i} (X^{i}_{s} \cM_{k-i}) = X^{i}_{s} \cM_{k-i}$ yields
\[
a^{i,k}_{0} = \frac{1}{c_{i,i,k-i}} = \frac{2^{i}}{i!} \frac{(2n+2k-2i-1)!}{(2n+2k-i-1)!}.
\]
Similarly, $\pi^{k}_{i} (X^{i+1}_{s} \cM_{k-i-1}) =0$ yields
\[
a^{i,k}_{1} = - \frac{c_{i,i+1,k-i-1}}{c_{i+1,i+1,k-i-1}} a^{i,k}_{0} = - \frac{1}{n+k-i-1} a^{i,k}_{0}.
\]
Thus continuing we arrive at the hypothesis
\[
a^{i,k}_{j} = (-1)^{j} \frac{2^{j}}{j!} \frac{(2n+2k-2i-j-2)!}{(2n+2k-2i-2)!} a^{i,k}_{0},
\]
which can be proven using induction. Indeed, suppose that the statement holds for $a_j^{i,k}$, $j \leq l$, then we prove that it also holds for $a_{l+1}^{i,k}$. This last coefficient has to satisfy
\[
\sum_{j=0}^{l+1} a_j^{i,k} \, c_{i+j,i+l+1,k-i-l-1}=0.
\]
Substituting the known expressions we obtain
\begin{align*}
a_{l+1}^{i,k} &= -\sum_{j=0}^l a_j^{i,k} \frac{c_{i+j,i+l+1,k-i-l-1}}{c_{i+l+1,i+l+1,k-i-l-1}}\\
&= -\sum_{j=0}^l a_j^{i,k} \frac{2^{l+1-j}}{(l+1-j)!} \frac{(\alpha-2l-1)!}{(\alpha-l-j)}\\
&= -\frac{2^{l+1}}{(l+1)!} \frac{(\alpha-2l-1)!}{\alpha!} a^{i,k}_{0}  \sum_{j=0}^{l} (-1)^{j} \binom{l+1}{j} \frac{(\alpha-j)!}{(\alpha-l-j)!},
\end{align*}
where we have put $\alpha = 2n+2k-2i-2$. The proof is now complete by remarking that
\[
\sum_{j=0}^{l+1} (-1)^{j} \binom{l+1}{j} \frac{(\alpha-j)!}{(\alpha-l-j)!}=0.
\]
This can either be obtained directly (see e.g. Lemma 5 in \cite{HDBThesis}) or as a consequence of Gauss's hypergeometric theorem, expressing $\,_2F_1(a,b;c;1)$ in terms of a product of Gamma functions.
\end{proof}

Note that there exists another way of computing the projection operators
on irreducible summands, namely using the Casimir operator of $\mathfrak{sl}(2,\mR)$. First observe that 
\[
\Gamma_{s} \cM_{k} = -\frac{1}{2} k (2n-1+k) \cM_{k}.
\]
It is then clear that the operators
\[
\mP_{i}^{k} =  \prod_{j=0, j \neq i}^{k} \dfrac{2\Gamma_{s} + j (2n-1+j)}{j(2n-1+j)-i(2n-1+i)}
\quad\in\,\mathrm{End}(\cP \otimes \cC)
\]
for $i = 0, \ldots, k$ satisfy
\[
\mP_{i}^{k} (X_{s}^{k-j} \cM_{j}) = \delta_{i j}X_{s}^{k-i} \cM_{i}.
\]

\section{Open questions and unresolved problems}

In \cite{KAD}, the symplectic Dirac operator $D_s$ on 
$\mR^{2n}\hookrightarrow \mR^{2n+1}$ was studied as an
$\mathfrak{sp}(2n+2,\mR)$-equivariant differential operator 
in the context of the contact parabolic geometry on 
the big open cell of a homogeneous 
space $\mR^{2n+1}\hookrightarrow Sp(2n+2,\mR)/P$ for the 
maximal parabolic subgroup $P\subset Sp(2n+2,\mR)$ with the 
nilpotent subgroup of $P$ isomorphic to the Heisenberg group.
As a consequence, the kernel of $D_s$ has the structure of 
an $\mathfrak{sp}(2n+2,\mR)$-module and we leave the question 
of its representation theoretic content open.

In \cite{Orsted}, the authors studied a particular deformation 
of the Howe duality and Fischer 
decomposition for the Dirac operator acting on spinor valued polynomials, coming from the Dunkl
deformation of the Dirac operator. It is an interesting question to develop the Dunkl version
of the symplectic Dirac operator in the context of symplectic reflection algebras (see \cite{MR1881922}).  

In the context of Lie superalgebras, the symplectic Dirac operator may be viewed as an operator that combines with the Dirac operator of \cite{slup} to form a ``twisted'' super Dirac operator  (see bottom half of Figure 1 of \cite{C}). A detailed representation theoretic study of this operator would complete the picture of \cite{C}.

We should also remark that analytic properties of symplectic monogenics were not studied at all,
and it is challenging to employ the techniques of e.g., the symplectic Fourier transform, to understand
the properties of the symplectic Dirac operator $D_s$ in detail.  

\hspace{0.5cm}

{\bf Acknowledgement:} P.S. and V.S. are supported by GA CR P201/12/G028.

\vspace{0.1cm}
\flushleft{Hendrik De Bie\\      
Department of Mathematical Analysis,\\ Ghent University, Galglaan 2, 9000 Gent, Belgium.\\            
Hendrik.DeBie@UGent.be               
}

\vspace{0.1cm}
{Petr Somberg\\               
Mathematical Institute of Charles University,\\ Sokolovsk\'a 83, 186 75 Praha, Czech Republic.\\            
somberg@karlin.mff.cuni.cz              
}

\vspace{0.1cm}
{
Vladimir Sou{\v{c}}ek\\               
Mathematical Institute of Charles University,\\ Sokolovsk\'a 83, 186 75 Praha, Czech Republic.\\            
soucek@karlin.mff.cuni.cz  
}

\end{document}